\title[Tail bounds for geometric and exponential variables]
{Tail bounds for sums of geometric and exponential variables}
\date{28 June, 2014; typo corrected 24 September, 2017}
\author{Svante Janson}
\thanks{Partly supported by the Knut and Alice Wallenberg Foundation}
\address{Department of Mathematics, Uppsala University, PO Box 480,
SE-751~06 Uppsala, Sweden}
\email{svante.janson@math.uu.se}
\newcommand\urladdrx[1]{{\urladdr{\def~{{\tiny$\sim$}}#1}}}
\subjclass[2010]{} 
\numberwithin{equation}{section}
\renewcommand\le{\leqslant}
\renewcommand\ge{\geqslant}
\newtheorem{theorem}{Theorem}[section]
\newtheorem{lemma}[theorem]{Lemma}
\newtheorem{corollary}[theorem]{Corollary}
\theoremstyle{definition}
\newtheorem{remark}[theorem]{Remark}
\newtheorem*{ack}{Acknowledgement}
\theoremstyle{remark}
\newenvironment{romenumerate}[1][0pt]{% optional argument changes indentation
\addtolength{\leftmargini}{#1}\begin{enumerate}% gives (i), (ii) etc.
 }{\end{enumerate}}
\newcounter{oldenumi}
\newcounter{thmenumerate}
\newenvironment{thmenumerate}
{\setcounter{thmenumerate}{0}%
 \def\item{\par% \ifnum\thethmenumerate=0\else\par\fi %\noindent\fi
 \refstepcounter{thmenumerate}\textup{(\roman{thmenumerate})\enspace}}
}
{}
\newcounter{romxenumerate}   %less indented than standard.
\newcounter{xenumerate}   %no left indentation; thus wider lines
\newcommand{\refT}[1]{Theorem~\ref{#1}}
\newcommand{\refC}[1]{Corollary~\ref{#1}}
\newcommand{\refL}[1]{Lemma~\ref{#1}}
\newcommand\marginal[1]{\marginpar[\raggedleft\tiny #1]{\raggedright\tiny#1}}
\newcommand\REM[1]{{\raggedright\texttt{[#1]}\par\marginal{XXX}}}
\newenvironment{comment}{\setbox0=\vbox\bgroup}{\egroup} %deletes!
\xdef\klockan{\the\count1.0\the\count255}
\xdef\klockan{\the\count1.\the\count255}\fi
\newcommand{\sumki}{\sum_{k=1}^\infty}
\newcommand{\sumin}{\sum_{i=1}^n}
\newcommand{\prodin}{\prod_{i=1}^n}
\newcommand\xpar[1]{(#1)}
\newcommand\bigpar[1]{\bigl(#1\bigr)}
\newcommand\Bigpar[1]{\Bigl(#1\Bigr)}
\newcommand\biggpar[1]{\biggl(#1\biggr)}
\newcommand\xcpar[1]{\{#1\}}
\def\rompar(#1){\textup(#1\textup)}    % usage: \rompar(...)
\newcommand\xqfrac[2]{#1/(#2)}
\def\xexp(#1){e^{#1}}
\newcommand\ceil[1]{\lceil#1\rceil}
\newcommand\Ntoo{\ensuremath{{N\to\infty}}}
\newcommand\punkt{.\spacefactor=1000}    % om problem!
\newcommand\ie{i.e\punkt}
\newcommand\eg{e.g\punkt}
\newcommand{\as}{a.s\punkt}
\newcommand{\tend}{\longrightarrow}
\newcommand\dto{\overset{\mathrm{d}}{\tend}}
\newcounter{CC}
\newcounter{cc}
\newcommand\E{\operatorname{\mathbb E{}}}
\renewcommand\P{\operatorname{\mathbb P{}}}
\newcommand\Var{\operatorname{Var}}
\newcommand\Exp{\operatorname{Exp}}
\newcommand\Ge{\operatorname{Ge}}
\newcommand\gl{\lambda}
\newcommand\gss{\sigma^2}
\newcommand\eps{\varepsilon}
\renewcommand\phi{\xxx}  %% WARNING
\newcommand\ett[1]{\boldsymbol1\xcpar{#1}}
\newcommand\qw{^{-1}}
\newcommand\qww{^{-2}}
\newcommand\qqq{^{1/3}}
\newcommand{\pgf}{probability generating function}
\newcommand\px{p_*}
\newcommand\ax{a_*}
\newcommand\lex{\gl\mu}
\newcommand\NN{^{(N)}}
\begin{document}

\begin{comment}  % Some suggestions:

60 Probability theory and stochastic processes
60C Combinatorial probability
60C05 Combinatorial probability

60F Limit theorems [See also 28Dxx, 60B12]
60F05 Central limit and other weak theorems
60F17 Functional limit theorems; invariance principles

60G Stochastic processes
60G09 Exchangeability
60G55 Point processes

\end{comment}

\begin{abstract} 
We give explicit bounds for the tail probabilities for sums of independent
geometric or exponential variables, possibly with different parameters.
\end{abstract}

\maketitle

\section{Introduction and notation}\label{S:intro}

Let $X=\sumin X_i$, where $n\ge1$ and $X_i$, $i=1,\dots,n$, 
are independent geometric random variables
with possibly different distributions: %; more precisely 
$X_i\sim \Ge(p_i)$
with $0<p_i\le 1$, \ie,
\begin{equation}\label{geo}
  \P(X_i=k) = p_i(1-p_i)^{k-1},\qquad k=1,2,\dots.
\end{equation}
Our goal is to estimate the tail probabilities $\P(X\ge x)$.
(Since $X$ is integer-valued, it suffices to consider integer $x$. However,
it is convenient to allow arbitrary real $x$, and we  do so.)

We define
\begin{align}
  \mu&:=\E X = \sumin \E X_i = \sumin \frac{1}{p_i},
\\
\px&:=\min_i p_i.
\end{align}
We shall see that $\px$ plays an important role in our estimates, which
roughly speaking show that the tail probabilities of $X$ decrease at 
about the same rate as the tail probabilities of $\Ge(\px)$, \ie, as 
for the variable $X_i$ 
with smallest $p_i$ and thus fattest tail.

Recall the simple and well-known fact that \eqref{geo} implies that,
  for any non-zero $z$ such that $|z|(1-p_i)<1$, 
  \begin{equation}\label{pgf}
	\E z^{X_i} = \sumki z^k \P(X_i=k)=\frac{p_i z}{1-(1-p_i)z}
=\frac{p_i }{z\qw-1+p_i}.
  \end{equation}

For future use, note that 
since $x\mapsto-\ln(1-x)$ is convex on $(0,1)$ and $0$ for $x=0$, 
\begin{equation}\label{tp}
  -\ln(1-x) \le -\frac{x}y\ln(1-y),
\qquad 0< x\le y<1.
\end{equation}

\begin{remark}
  The theorems and corollaries below
hold also, with the same proofs, for infinite sums
  $X=\sum_{i=1}^\infty X_i$, provided $\E X=\sum_i p_i\qw<\infty$.
\end{remark}

\begin{ack}
  This work was initiated during
the 25th International Conference on Probabilistic, Combinatorial and
Asymptotic Methods for the Analysis of Algorithms, AofA’14, in
Paris-Jussieu, June 2014, in response to a question by Donald Knuth.
%see \cite{??}.
I thank Donald Knuth and Colin McDiarmid for helpful discussions.
\end{ack}

\section{Upper bounds for the upper tail}

We begin with a simple upper bound obtained by  
the classical method of estimating the moment generating function
(or \pgf) and
using the standard inequality (an instance of Markov's inequality)
\begin{equation}
  \label{markov}
\P(X\ge x)\le z^{-x}\E z^X,
\qquad z\ge1,
\end{equation}
or equivalently
\begin{equation}
  \label{markov2}
\P(X\ge x)\le e^{-tx}\E e^{tX},
\qquad t\ge0.
\end{equation}
(Cf.~the related ``Chernoff bounds'' for the binomial distribution that are
proved by this method, see \eg{} \cite[Theorem 2.1]{JLR}, and
see \eg{} \cite{BLM} for other applications of this method.
See also \eg{} \cite[Chapter 2]{DemboZeitouni} or \cite[Chapter 27]{Kallenberg}
for more general large deviation theory.)

\begin{theorem}\label{T1}
  For any $p_1,\dots,p_n\in(0,1]$ and any $\gl\ge1$,
\begin{equation}\label{xp}
  \P(X\ge \lex)
\le 
 e^{-\px\mu(\gl-1-\ln \gl)}.
\end{equation}
\end{theorem}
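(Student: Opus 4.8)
The plan is to run the classical Chernoff argument \eqref{markov}, but to make the change of variable that turns the geometric moment generating functions into exponential ones, and then to absorb the unequal parameters through the convexity inequality \eqref{tp}.

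First, by independence $\E z^X=\prodin\E z^{X_i}$, so \eqref{markov} gives $\P(X\ge\lex)\le z^{-\lex}\prodin\E z^{X_i}$ for every $z\ge1$. The key simplification is to write $z\qw=1-s$, i.e.\ $z=1/(1-s)$ with $0\le s<\px$. Then the \emph{last} expression in \eqref{pgf} collapses: $\E z^{X_i}=p_i/(z\qw-1+p_i)=p_i/(p_i-s)=1/(1-s/p_i)$, which is exactly the form of an exponential moment generating function and no longer carries the awkward numerator $p_iz$. Taking logarithms, the quantity to be bounded becomes $-\lex\ln z+\sumin\ln\E z^{X_i}=\gl\mu\ln(1-s)-\sumin\ln(1-s/p_i)$.

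Next I would apply \eqref{tp} to each term $-\ln(1-s/p_i)$. Since $0<s/p_i\le s/\px<1$, taking $x=s/p_i$ and $y=s/\px$ yields $-\ln(1-s/p_i)\le\frac{\px}{p_i}\bigl(-\ln(1-s/\px)\bigr)$. Summing and using $\sumin 1/p_i=\mu$ gives $\sumin\ln\E z^{X_i}\le-\px\mu\ln(1-s/\px)$. This is the step that reduces the $n$ different parameters to the single worst one $\px$, and it is the real heart of the argument; one must be careful to apply \eqref{tp} in this direction (bounding $-\ln(1-s/p_i)$ from above), which works precisely because $p_i\ge\px$ makes $s/p_i\le s/\px$.

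It then remains to choose $s$. The clean choice is $s=\px(\gl-1)/\gl$, which is legitimate since $0\le s<\px$ for $\gl\ge1$, and which makes $1-s/\px=1/\gl$, so that $-\px\mu\ln(1-s/\px)=\px\mu\ln\gl$. For the remaining term, the elementary estimate $\ln(1-s)\le-s$ gives $\gl\mu\ln(1-s)\le-\gl\mu s=-\px\mu(\gl-1)$. Adding the two contributions yields an exponent bounded by $-\px\mu(\gl-1-\ln\gl)$, which on exponentiating is \eqref{xp}. Thus the only genuine obstacle is spotting the substitution $z\qw=1-s$ and orienting \eqref{tp} correctly; once that is done the minimization over $z$ degenerates to the one-line inequality $\ln(1-s)\le-s$, and the boundary cases $\gl=1$ (giving the trivial bound $1$) and $\px=1$ are covered automatically.
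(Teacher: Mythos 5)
Your proof is correct and follows essentially the same route as the paper: the Chernoff--Markov bound, the exact geometric \pgf{} giving $(1-s/p_i)^{-1}$, the convexity inequality \eqref{tp} to reduce to $\px$, and the same optimal parameter $s=(1-\gl\qw)\px$. The only difference is cosmetic: the paper parametrizes by $z=e^t$ and applies $e^{-t}\ge 1-t$ inside the \mgf{} bound, whereas you set $z\qw=1-s$ and apply the equivalent inequality $\ln(1-s)\le -s$ to the prefactor $z^{-\lex}$ instead.
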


\begin{proof}
If $0\le t<p_i$, then   $e^{-t}-1+p_i\ge p_i-t>0$, and thus by \eqref{pgf},
\begin{equation}
  \E e^{t X_i} 
%= \frac{p_i e^t}{1-(1-p_i)e^t}
= \frac{p_i}{e^{-t}-1+p_i}
\le  \frac{p_i}{p_i-t}
=\Bigpar{1-\frac{t}{p_i}}\qw.
\end{equation}
Hence, if $0\le t <\px=\min_i p_i$, then
\begin{equation}
  \E e^{t X} = \prodin \E e^{t X_i} 
\le \prodin \Bigpar{1-\frac{t}{p_i}}\qw
\end{equation}
and, by \eqref{markov2},
\begin{equation}\label{bo}
  \begin{split}
\P(X\ge \gl \mu)\le e^{-t\gl\mu} \E e^{t X}
\le \exp \biggpar{-t\gl\mu + \sumin-\ln \Bigpar{1-\frac{t}{p_i}}}.
  \end{split}
\end{equation}
By \eqref{tp} and  $0<\px/p_i\le1$,
we have, for $0\le t<\px$,
\begin{equation}\label{tpp}
-\ln \Bigpar{1-\frac{t}{p_i}}
 \le-\frac{\px}{p_i}\ln\Bigpar{1-\frac{t}{\px}}.
\end{equation}
Consequently, 
\eqref{bo} yields
\begin{equation}\label{box}
  \begin{split}
\P(X\ge \gl \mu)
&\le \exp\biggpar{-t\gl\mu -\ln \Bigpar{1-\frac{t}{\px}} \sumin
  \frac{\px}{p_i}}
\\&
= \exp\biggpar{-t\gl\mu -\px\mu\ln \Bigpar{1-\frac{t}{\px}}}.
  \end{split}
\end{equation}
%Choose $t=(1-\eps)\px$ for some $\eps\in(0,1)$.
Choosing $t=(1-\gl\qw)\px$ (which is optimal in \eqref{box}), we obtain
\eqref{xp}. 
\end{proof}

As a corollary we obtain a bound that is generally much cruder, but has the
advantage of not depending on the $p_i$'s at all.

\begin{corollary}\label{C1}
    For any $p_1,\dots,p_n\in(0,1]$ and any $\gl\ge1$,
\begin{equation}\label{c1}
  \P(X\ge \lex)
\le \gl e^{1-\gl} = e\gl e^{-\gl}.
\end{equation}
\end{corollary}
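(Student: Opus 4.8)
The plan is to obtain \refC{C1} directly from \refT{T1} by bounding the exponent in \eqref{xp} in a way that eliminates the dependence on the $p_i$'s. Two elementary observations suffice. First, $\px\mu\ge1$: since $\px=\min_i p_i$, some index $i$ has $p_i=\px$, and that single term contributes $1/\px$ to $\mu=\sumin 1/p_i$, so that $\mu\ge1/\px$. Second, the function $\gl\mapsto\gl-1-\ln\gl$ is nonnegative for $\gl\ge1$, since it vanishes at $\gl=1$ and has derivative $1-\gl\qw\ge0$ on $[1,\infty)$.

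Combining these, since $\gl-1-\ln\gl\ge0$ and $\px\mu\ge1$, the exponent in \eqref{xp} satisfies $\px\mu\bigpar{\gl-1-\ln\gl}\ge\gl-1-\ln\gl$, and hence, by the monotonicity of $t\mapsto e^{-t}$,
\[
  \P(X\ge\lex)\le e^{-\px\mu(\gl-1-\ln\gl)}
  \le e^{-(\gl-1-\ln\gl)}
  = \gl e^{1-\gl}=e\gl e^{-\gl},
\]
which is exactly \eqref{c1}; the final equalities are just the identities $-(\gl-1-\ln\gl)=1-\gl+\ln\gl$ and $e^{1-\gl}=e\cdot e^{-\gl}$.

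There is essentially no hard step here: the only point requiring a moment's thought is the inequality $\px\mu\ge1$, which is precisely what decouples the estimate from the individual parameters $p_i$. Once that and the nonnegativity of $\gl-1-\ln\gl$ are in hand, the corollary follows from monotonicity of the exponential and a one-line simplification.
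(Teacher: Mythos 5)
Your proposal is correct and follows exactly the paper's own route: deduce $\px\mu\ge1$ from $\mu\ge1/p_i$, note that $\gl-1-\ln\gl\ge0$ for $\gl\ge1$, and substitute into \eqref{xp}. The only difference is that you spell out the nonnegativity of $\gl-1-\ln\gl$, which the paper leaves implicit.
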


\begin{proof}
  Use $\mu\ge1/p_i$ for each $i$, and thus $\mu\px\ge1$ in \eqref{xp}.
(Alternatively, use $t=(1-\gl\qw)/\mu$ in \eqref{box}.)
\end{proof}

The bound in \refT{T1}
is rather sharp in many cases.
Also the cruder \eqref{c1} is almost 
sharp for $n=1$ (a single $X_i$) and small
$\px=p_1$; in this case
$\mu=1/p_1$ and
\begin{equation}
\P(X\ge\lex)=(1-p_1)^{\ceil{\gl\mu}-1}
=\exp\bigpar{\gl+O(\gl p_1)}  .
\end{equation}
Nevertheless,
we can improve \eqref{xp} somewhat, in particular when $\px=\min_i p_i$ is
not small, 
by using more careful estimates.

\begin{theorem}\label{T2}
For any $p_1,\dots,p_n\in(0,1]$ and any $\gl\ge1$,
  \begin{equation}\label{t2}
	\P(X\ge \lex)
\le\gl\qw (1-\px)^{(\gl-1-\ln\gl)\mu}.
  \end{equation}
\end{theorem}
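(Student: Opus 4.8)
Compared with \refT{T1}, the bound \eqref{t2} improves two things at once: the base is $1-\px$ rather than $e^{-\px}$, and there is an extra polynomial factor $\gl\qw$. Since $1-x\le e^{-x}$, the base $1-\px$ cannot come from a single exponential moment (which is what produces $e^{-\px}$ in \eqref{xp}); it is the base of the \emph{exact} geometric tail. The plan is therefore to anchor the estimate on exact tails. Concretely, I would use the representation $X_i\eqd\ceil{E_i}$, where the $E_i$ are independent with $\P(E_i>t)=(1-p_i)^t$, i.e.\ exponential of rate $-\ln(1-p_i)$; the smallest of these rates is $-\ln(1-\px)$, and the corresponding tail $(1-\px)^t$ is exactly the base appearing in \eqref{t2}. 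Equivalently one may work directly with \eqref{pgf}.

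For the exponential factor I would run the Chernoff argument of \refT{T1} but keep \eqref{pgf} exact: choosing $z=(1-a)\qw$ with $0<a<\px$ in \eqref{markov} gives $\E z^{X_i}=p_i/(p_i-a)=(1-a/p_i)\qw$ exactly and threshold factor $z^{-\lex}=(1-a)^{\lex}$, so that $\P(X\ge\lex)\le(1-a)^{\lex}\prodin(1-a/p_i)\qw$. Applying \eqref{tp} with $x=a/p_i\le a/\px$ collapses the product, $\sumin-\ln(1-a/p_i)\le-\px\mu\ln(1-a/\px)$, and optimizing in $a$ (the analogue of the choice $t=(1-\gl\qw)\px$ in \refT{T1}) should yield the exponent $(\gl-1-\ln\gl)\mu$ attached to the base $1-\px$.

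The genuine difficulty is the factor $\gl\qw$. The moment method cannot supply it: already for $n=1$ the argument above is exact, yet the optimized Chernoff bound exceeds $\P(X\ge\lex)=(1-\px)^{\ceil{\lex}-1}$ by a factor of order $\gl$, so \eqref{t2} asserts precisely that the Chernoff bound overshoots by (at least) this factor, uniformly in the $p_i$. To capture it I would sharpen the tail itself rather than a single moment --- summing the exact geometric tail of the fattest coordinate (after conditioning on the others), or a local/saddle-point refinement --- and combine this with $\px\mu\ge1$ (which follows from $\mu\ge1/p_i$) and, if needed, the trivial bound $\P(X\ge\lex)\le\E X/\lex=\gl\qw$. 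I expect this extraction of the polynomial prefactor, which has no analogue in \refT{T1}, to be the main obstacle: the naive conditioning bound is neither symmetric in the $p_i$ nor uniformly below \eqref{t2}, so the prefactor must be produced by trading the exponential factor against the polynomial one with some care.
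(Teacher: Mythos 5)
Your Chernoff half coincides with the paper's computation: keeping \eqref{pgf} exact with $z\qw=1-a$ and optimizing gives $a=(\gl-1)\px/(\gl-\px)$, i.e.\ precisely the paper's tilt $z=(\gl-\px)/(\gl(1-\px))$, and after the convexity step \eqref{tpp} one arrives at $\ln\P(X\ge\lex)\le\gl\mu\ln(1-\px)+\mu f(\gl)$ with $f(\gl)=-\gl\ln\frac{\gl-\px}{\gl}+\px\ln\frac{\gl-\px}{1-\px}$. Be aware that even here a further step remains which you only assert: one must verify $f(\gl)\le-(1+\ln\gl)\ln(1-\px)$, which the paper does by noting $f(1)=-\ln(1-\px)$, bounding $f'(\gl)=-\ln(1-\px/\gl)\le-\gl\qw\ln(1-\px)$ via \eqref{tp}, and integrating. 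This is elementary but not automatic.

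The genuine gap is the factor $\gl\qw$, which you correctly diagnose as out of reach for the plain Markov step but then leave as an open obstacle with only candidate strategies. The paper's device is a sharpened Markov inequality (\refL{L1}): for $z\ge1$ with $z(1-\px)<1$,
\begin{equation*}
\P(X\ge k)\le\frac{1-z(1-\px)}{\px}\,z^{-k}\,\E z^X .
\end{equation*}
Its proof is exactly the idea you gesture at. First one shows the tail-ratio bound $\P(X\ge j)\ge(1-\px)^{j-k}\P(X\ge k)$ for integers $j\ge k$ (\refL{L0}), by conditioning on $X-X_1$ (with $p_1=\px$) and using the exact geometric tail of the fattest coordinate. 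Then one writes, for $X\ge k$, the telescoping identity $z^X=z^k+(z-1)\sum_{j=k}^{X-1}z^j$, takes expectations to get $\E z^X\ge z^k\P(X\ge k)+(z-1)\sum_{j=k}^\infty z^j\P(X\ge j+1)$, and inserts the tail-ratio bound to sum a geometric series, yielding $\E z^X\ge z^k\P(X\ge k)\cdot\px/(1-z(1-\px))$. The punchline you would still need to discover is that with the optimal tilt above the prefactor $\bigpar{1-z(1-\px)}/\px$ evaluates to exactly $1/\gl$; so there is no delicate ``trading'' between the exponential and polynomial factors, and the asymmetry in the $p_i$ that worries you is harmless because the lemma uses only the single smallest $p_i$. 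Your fallback $\P(X\ge\lex)\le\gl\qw$ from Markov's first-moment inequality cannot be combined multiplicatively with the exponential bound, so it does not substitute for this lemma.
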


The proof is given below. We note that \refT{T2} implies a minor improvement
of \refC{C1}:
\begin{corollary}\label{C2}
    For any $p_1,\dots,p_n\in(0,1]$ and any $\gl\ge1$,
\begin{equation}\label{c2}
  \P(X\ge \lex)
\le  e^{1-\gl} .
\end{equation}
\end{corollary}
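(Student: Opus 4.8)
The plan is to derive \eqref{c2} directly from \refT{T2}, mirroring exactly the way \refC{C1} was obtained from \refT{T1}. The starting point is the bound \eqref{t2},
\[
\P(X\ge \lex)\le\gl\qw (1-\px)^{(\gl-1-\ln\gl)\mu},
\]
and the only tools needed are the elementary inequality $1-x\le e^{-x}$ and the observation $\px\mu\ge1$ already used in the proof of \refC{C1}.

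First I would replace the base $1-\px$ by $e^{-\px}$, using $1-x\le e^{-x}$, which turns the right-hand side of \eqref{t2} into $\gl\qw e^{-\px\mu(\gl-1-\ln\gl)}$; note that this is simply $\gl\qw$ times the bound \eqref{xp} of \refT{T1}, so at this stage \refT{T2} is visibly at least as strong. Next I would invoke $\px\mu\ge1$, which holds because $\mu=\sumin 1/p_i\ge 1/p_i\ge1/\px$ for each $i$. Since the exponent carries the factor $\gl-1-\ln\gl$, which is nonnegative for $\gl\ge1$, the quantity $-\px\mu(\gl-1-\ln\gl)$ is at most $-(\gl-1-\ln\gl)$, and hence $e^{-\px\mu(\gl-1-\ln\gl)}\le e^{-(\gl-1-\ln\gl)}$. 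This gives $\P(X\ge\lex)\le\gl\qw e^{-(\gl-1-\ln\gl)}$.

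Finally I would compute $e^{-(\gl-1-\ln\gl)}=\gl\, e^{1-\gl}$, so that the prefactor $\gl\qw$ cancels the $\gl$ and leaves exactly $e^{1-\gl}$, which is \eqref{c2}. There is no genuine obstacle here: the corollary is essentially immediate once \refT{T2} is in hand, and the cancellation of the $\gl$-factor is precisely why \eqref{c2} improves on the bound $\gl e^{1-\gl}$ of \refC{C1}. The single point warranting a moment's care is confirming that $\gl-1-\ln\gl\ge0$ on $[1,\infty)$ (it vanishes at $\gl=1$ and has derivative $1-\gl\qw\ge0$), since this nonnegativity is what makes the replacement of $\px\mu$ by its lower bound $1$ increase, rather than decrease, the exponential.
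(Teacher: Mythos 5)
Your proof is correct and is essentially the paper's own one-line argument: the paper writes $(1-\px)^\mu\le e^{-\px\mu}\le e^{-1}$ and raises this to the nonnegative power $\gl-1-\ln\gl$, which is exactly your chain in a slightly different order. One small slip in your justification of $\px\mu\ge1$: the inequality $1/p_i\ge 1/\px$ is backwards for general $i$ (since $\px=\min_i p_i$ gives $1/p_i\le 1/\px$); the correct statement is that $\mu\ge 1/p_{i_0}=1/\px$ for the index $i_0$ attaining the minimum.
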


\begin{proof}
  Use \eqref{t2} and $(1-\px)^\mu\le e^{-\px\mu}\le e^{-1}$.
\end{proof}

We begin the proof of \refT{T2}
with two lemmas yielding a minor improvement
of \eqref{markov}
using the fact that the variables are
geometric. (The lemmas actually use only that one of the variables is
geometric.) 

\begin{lemma}
  \label{L0}
  \begin{thmenumerate}
  \item 
For any integers $j$ and $k$ with $j\ge k$,
\begin{equation}\label{ao}
  \P(X\ge j)\ge(1-\px)^{j-k}  \P(X\ge k).
\end{equation}
\item 
For any real numbers $x$ and $y$ with $x\ge y$,
\begin{equation}\label{aoxy}
  \P(X\ge x)\ge(1-\px)^{x-y+1}  \P(X\ge y).
\end{equation}
  \end{thmenumerate}
\end{lemma}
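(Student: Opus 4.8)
The plan is to establish (i) first and then deduce (ii) by rounding. For (i) it suffices, by multiplying inequalities for consecutive integers between $k$ and $j$, to prove the single one-step bound
\[
\P(X\ge k+1)\ge(1-\px)\,\P(X\ge k),\qquad k\in\bbZ;
\]
iterating this $j-k$ times then yields \eqref{ao}.

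To prove the one-step bound I would single out an index $i_0$ with $p_{i_0}=\px$ and write $X=X_{i_0}+W$ with $W:=\sum_{i\ne i_0}X_i$, a nonnegative integer-valued \rv{} independent of $X_{i_0}$. The key elementary fact is that, for $X_{i_0}\sim\Ge(\px)$ and every integer $\ell$,
\[
\P(X_{i_0}\ge \ell+1)\ge(1-\px)\,\P(X_{i_0}\ge\ell),
\]
which holds with equality once $\ell\ge1$ (there $\P(X_{i_0}\ge\ell)=(1-\px)^{\ell-1}$) and reduces to $1\ge1-\px$ when $\ell\le0$. Conditioning on $W$ and applying this with $\ell=k-m$ on $\{W=m\}$ gives
\[
\P(X\ge k+1)=\sum_m\P(W=m)\,\P(X_{i_0}\ge k+1-m)\ge(1-\px)\sum_m\P(W=m)\,\P(X_{i_0}\ge k-m)=(1-\px)\,\P(X\ge k),
\]
as wanted. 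This argument also explains the parenthetical remark: only $X_{i_0}$ needs to be geometric, while $W$ may be arbitrary.

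For (ii) I would use that $X$ is integer-valued, so $\P(X\ge x)=\P(X\ge\ceil x)$ and $\P(X\ge y)=\P(X\ge\ceil y)$. Setting $j:=\ceil x$ and $k:=\ceil y$ we have $j\ge k$ since $x\ge y$, so part (i) gives $\P(X\ge x)\ge(1-\px)^{\ceil x-\ceil y}\P(X\ge y)$. It then remains to compare exponents: from $\ceil x<x+1$ and $\ceil y\ge y$ we get $\ceil x-\ceil y\le x-y+1$, and (assuming $\px<1$) the map $t\mapsto(1-\px)^t$ is decreasing, so $(1-\px)^{\ceil x-\ceil y}\ge(1-\px)^{x-y+1}$, which is \eqref{aoxy}. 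The case $\px=1$ is trivial, since then the claimed right-hand side equals $0^{x-y+1}=0$ because $x-y+1\ge1$.

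The step needing the most care is the choice of $i_0$: the base $1-\px$ in the one-step bound is available only for a variable of minimal parameter (the fattest tail), since for a general $i$ one merely has $\P(X_i\ge\ell+1)\ge(1-p_i)\P(X_i\ge\ell)$ with $1-p_i\le1-\px$, an inequality in the wrong direction. The only remaining subtlety is the extra $+1$ in the exponent of (ii), which is exactly the cost of rounding $x$ and $y$ up while they sit on opposite sides of the bound.
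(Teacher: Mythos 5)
Your proof is correct and follows essentially the same route as the paper: both arguments condition on the sum of the variables other than the one with minimal parameter and exploit the geometric tail $\P(X_{i_0}\ge\ell)=(1-\px)^{(\ell-1)_+}$, the only difference being that you telescope a one-step bound $\P(X\ge k+1)\ge(1-\px)\P(X\ge k)$ while the paper gets the general case in one stroke from the exponent inequality $(j-i-1)_+\le j-k+(k-i-1)_+$. Part (ii), including the rounding and the extra $+1$ in the exponent, matches the paper's deduction exactly.
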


\begin{proof}
(i).
  We may without loss of generality assume that $\px=p_1$.
Then, for any integers $i,j,k$ with $j\ge k$,
\begin{equation}
  \P(X\ge j\mid X-X_1=i)
=\P(X_1\ge j-i)
=(1-\px)^{(j-i-1)_+},
\end{equation}
and similarly for $  \P(X\ge k\mid X-X_1=i)$.
Since $(j-i-1)_+\le j-k+(k-i-1)_+$, it follows that
\begin{equation}
  \P(X\ge j\mid X-X_1=i)
\ge(1-\px)^{j-k}  \P(X\ge k\mid X-X_1=i)
\end{equation}
for every $i$, and thus \eqref{ao} follows by taking the expectation.

(ii). For real $x$ and $y$ we obtain from \eqref{ao} 
\begin{equation}%\label{aoxy}
  \begin{split}
  \P(X\ge x)&=\P(X\ge\ceil x)\ge(1-\px)^{\ceil x-\ceil y}  \P(X\ge \ceil y)
\\&
\ge(1-\px)^{x- y+1}  \P(X\ge  y).	
  \end{split}
\end{equation}
  \end{proof}

\begin{lemma}\label{L1}
  For any $x\ge0$ and $z\ge1$ with $z(1-\px)<1$,
  \begin{equation}\label{l1}
	\P(X\ge x)\le \frac{1-z(1-\px)}{\px}z^{-x}\E z^X.
  \end{equation}
\end{lemma}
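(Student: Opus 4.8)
The plan is to improve the trivial Markov bound $\P(X\ge x)\le z^{-x}\E z^X$ by exploiting \refL{L0}, which says that the tails of $X$ cannot decay faster than those of $\Ge(\px)$. First I would reduce to integer thresholds: setting $m:=\ceil{x}$, we have $\P(X\ge x)=\P(X\ge m)$, and since $z\ge1$ we have $z^{-m}\le z^{-x}$; hence it suffices to prove $\P(X\ge m)\le \frac{1-z(1-\px)}{\px}\,z^{-m}\E z^X$ for the integer $m\ge0$. Throughout write $q_k:=\P(X\ge k)$.

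The engine of the argument is a matching lower bound for $\E z^X$. Since $z\ge1$, every term $z^k\P(X=k)$ is nonnegative, so $\E z^X\ge\sum_{k\ge m}z^k\P(X=k)$. Using the telescoping identity $z^k=z^m+(z-1)\sum_{j=m}^{k-1}z^j$ for $k\ge m$ and interchanging the (nonnegative) double sum by Tonelli, I would rewrite this tail sum as $z^m q_m+(z-1)\sum_{j\ge m}z^j q_{j+1}$, thereby expressing $\E z^X$ in terms of the tail probabilities rather than the point masses.

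Now I apply \refL{L0}(i) in the form $q_{j+1}\ge(1-\px)^{(j+1)-m}q_m$, valid for all $j\ge m$. This turns the remaining sum into a geometric series $q_m\sum_{j\ge m}z^j(1-\px)^{(j+1)-m}$, which converges precisely because $z(1-\px)<1$ — the same hypothesis that guarantees $\E z^X<\infty$, since $z(1-p_i)\le z(1-\px)<1$ for every $i$. Summing the geometric series and simplifying, the bracketed factor collapses to $\px/\bigpar{1-z(1-\px)}$, giving $\E z^X\ge \frac{\px}{1-z(1-\px)}\,z^m q_m$. Rearranging yields the integer bound, and the reduction in the first step upgrades it to arbitrary real $x$.

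I expect the only real subtlety to be the bookkeeping: correctly reorganizing the tail sum (the telescoping and interchange step) and verifying the algebraic cancellation that produces the clean constant $\px$ in the numerator. Convergence and nonnegativity are automatic from the hypotheses $z\ge1$ and $z(1-\px)<1$, so no delicate limiting argument is needed; the main conceptual point is simply recognizing that \refL{L0} is exactly the input that lets one beat the crude Markov estimate.
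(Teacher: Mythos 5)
Your proposal is correct and follows essentially the same route as the paper: restrict $\E z^X$ to the event $\{X\ge m\}$, telescope $z^k=z^m+(z-1)\sum_{j=m}^{k-1}z^j$, interchange the sums, invoke Lemma~\ref{L0}(i) to bound $\P(X\ge j+1)$ from below by $(1-\px)^{j+1-m}\P(X\ge m)$, and sum the resulting geometric series (convergent since $z(1-\px)<1$) to get the factor $\px/\bigpar{1-z(1-\px)}$. The reduction to integer thresholds via $m=\ceil{x}$ and $z^{-m}\le z^{-x}$ matches the paper's final step as well.
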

\begin{proof}
Since $z\ge1$, \eqref{ao} implies that for every $k\ge1$,
\begin{equation}
  \begin{split}
\E z^X
&\ge
\E(z^X\cdot\ett{X\ge k})
=\E\biggpar{\biggpar{z^k+(z-1)\sum_{j=k}^{X-1}z^j}\ett{X\ge k}}	
\\&
=\E\biggpar{z^k\ett{X\ge k}+(z-1)\sum_{j=k}^{\infty}z^j\ett{X\ge j+1}}	
\\&
=z^k\P\xpar{X\ge k}+(z-1)\sum_{j=k}^{\infty}z^j\P\xpar{X\ge j+1}
\\&
\ge z^k\P\xpar{X\ge k}\biggpar{1+(z-1)\sum_{j=k}^{\infty}z^{j-k}(1-\px)^{j+1-k}}
\\&
=
 z^k\P\xpar{X\ge k}\biggpar{1+\frac{(z-1)(1-\px)}{1-z(1-\px)}}
\\&
=
 z^k\P\xpar{X\ge k}\frac{\px}{1-z(1-\px)}.
  \end{split}
\end{equation}
The result \eqref{l1}
follows when $x=k$ is a positive integer. The general case
follows by taking $k=\max(\ceil x,1)$ since then $\P(X\ge x)=\P(X\ge k)$.
\end{proof}

\begin{proof}[Proof of \refT{T2}]
We may assume that $\px<1$. (Otherwise every $p_i=1$ and $X_i=1$ a.s., so
$X=n=\mu$ \as{} and the result is trivial.)
We then choose 
\begin{equation}
  \label{z}
z:=\frac{\gl-\px}{\gl(1-\px)},
\end{equation}
\ie,
\begin{equation}\label{zqw}
  z\qw = \frac{\gl(1-\px)}{\gl-\px}
= 1-\frac{(\gl-1)\px}{\gl-\px}
;
\end{equation}
note that $z\qw\le1$ so $z\ge1$
and $z\qw>1-\px\ge1-p_i$ for every $i$.
Thus, by \eqref{pgf},
\begin{equation}\label{qk}
  \E z^X = \prodin \E z^{X_i} 
=\prodin\frac{p_i}{z\qw-1+p_i}
=\prodin\frac{1}{1-(1-z\qw)/p_i}
.
\end{equation}
 By \eqref{qk}, \eqref{tpp} (with $t=1-z\qw<\px$) and \eqref{zqw},
\begin{equation}\label{sw}
  \begin{split}
\ln  \E z^X
&=-\sumin\ln\Bigpar{1-\frac{1-z\qw}{p_i}}
\le 
-\sumin\frac{\px}{p_i}\ln\Bigpar{1-\frac{1-z\qw}{\px}}
\\&
= -\sumin\frac{\px}{p_i}\ln\Bigpar{1-\frac{\gl-1}{\gl-\px}}
= -\mu\px\ln\frac{1-\px}{\gl-\px}
= \mu\px\ln\frac{\gl-\px}{1-\px}.
  \end{split}
\end{equation}
Furthermore, by \eqref{z},
\begin{equation}
  \frac{1-z(1-\px)}{\px}
=
  \frac{1-(\gl-\px)/\gl}{\px} = \frac{1}{\gl}.
\end{equation}
Hence, \refL{L1}, \eqref{z} and \eqref{sw} yield
\begin{equation}\label{bx}
  \begin{split}
\ln	\P(X\ge \gl\mu)
&\le -\ln\gl -\gl\mu \ln z +\ln\E z^X
\\&
\le -\ln\gl -\gl\mu \ln \frac{\gl-\px}{\gl(1-\px)}
+\mu\px\ln\frac{\gl-\px}{1-\px}
\\&
=-\ln\gl+\gl\mu\ln (1-\px) + \mu f(\gl),
  \end{split}
\end{equation}
where
\begin{equation}
  \begin{split}
f(\gl)&:=
 -\gl \ln \frac{\gl-\px}{\gl}
+\px\ln\frac{\gl-\px}{1-\px}
\\&\phantom:
=
 -(\gl-\px) \ln \xpar{\gl-\px}
+\gl\ln\gl
-\px\ln\xpar{1-\px}
.
  \end{split}
\end{equation}
We have $f(1)=-\ln(1-\px)$ and, for $\gl\ge1$,
using \eqref{tp},
\begin{equation}\label{f'}
  \begin{split}
	f'(\gl)
=
- \ln \xpar{\gl-\px}
+\ln\gl
=-\ln\Bigpar{1-\frac{\px}{\gl}}
\le
-\frac{1}{\gl}\ln\xpar{1-{\px}}.
  \end{split}
\end{equation}
Consequently, by integrating \eqref{f'}, for all $\gl\ge1$,
\begin{equation}
  f(\gl)\le -\ln(1-\px) -\ln\gl\cdot\ln(1-\px),
\end{equation}
and the result \eqref{t2} follows by \eqref{bx}.
\end{proof}

\begin{remark}
  Note that for large $\gl$, the exponents above are roughly linear in
$\gl$, while for $\gl=1+o(1)$ we have $\gl-1-\ln\gl\sim\frac12(\gl-1)^2$ so
  the exponents are quadratic in $\gl-1$. The latter is to be expected from
  the central limit theorem. However, if $\gl=1+\eps$ with $\eps$ very small
  and the central limit theorem is applicable, then $\P(X\ge(1+\eps)\mu)$ is 
roughly $\exp(-\eps^2\mu^2/(2\gss))$, where $\gss=\Var X = \sumin \Var X_i
=\sumin\frac{1-p_i}{p_i^2}$. Hence, 
in this case 
the exponents in \eqref{xp} and \eqref{t2} are
asymptotically too small  by a factor of rougly,
for small $p_i$, 
\begin{equation}
\frac{\px\mu}{\mu^2/\gss}\approx \frac{\px\sumin p_i\qww}{\sumin p_i\qw},
\end{equation}
which may be much smaller than 1. (For example if $p_2=\dots=p_n$ and
$p_1=p_2/ n\qqq$.)
\end{remark}

\section{Upper bounds for the lower tail}

We can similarly bound the probability $\P(X\le\lex)$ for $\gl\le1$.
We give only a simple bound corresponding to \refT{T1}.
(Note that $\gl-1-\ln\gl>0$ for both $\gl\in(0,1)$ and $\gl\in(1,\infty)$.)

\begin{theorem}\label{TL1}
  For any $p_1,\dots,p_n\in(0,1]$ and any $\gl\le1$,
\begin{equation}\label{xp-}
  \P(X\le \lex)
\le 
 e^{-\px\mu(\gl-1-\ln \gl)}.
\end{equation}
\end{theorem}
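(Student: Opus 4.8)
The plan is to mirror the proof of \refT{T1} exactly, using the same exponential-moment (Chernoff) method but now applied to the lower tail with a negative exponential parameter. For the upper tail we used $\E e^{tX}$ with $t\ge0$; for the lower tail the analogous device is Markov's inequality applied to $e^{-tX}$ for $t\ge0$, namely $\P(X\le\lex)=\P(e^{-tX}\ge e^{-t\lex})\le e^{t\lex}\E e^{-tX}$. The crucial point is that for the lower tail there is \emph{no} restriction forcing $t<\px$: since each $X_i\ge1$, the generating function $\E z^{X_i}$ in \eqref{pgf} is finite for all $z$ with $|z|(1-p_i)<1$, and in particular for $z=e^{-t}<1$ (\ie{} $t>0$) it is finite for every $i$ with no upper bound on $t$. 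This removes the main technical constraint of the upper-tail case, so I expect the argument to be, if anything, cleaner.

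First I would record, for $t\ge0$, the single-variable bound: from \eqref{pgf} with $z=e^{-t}$,
\begin{equation}
\E e^{-tX_i}=\frac{p_i}{e^{t}-1+p_i}\le\frac{p_i}{p_i+t}=\Bigpar{1+\frac{t}{p_i}}\qw,
\end{equation}
where the inequality uses $e^{t}-1\ge t$. Taking the product over $i$ and applying the Markov bound gives
\begin{equation}
\P(X\le\lex)\le e^{t\gl\mu}\E e^{-tX}\le\exp\biggpar{t\gl\mu+\sumin-\ln\Bigpar{1+\frac{t}{p_i}}}.
\end{equation}
The next step is the convexity reduction that concentrates everything on $\px$. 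Here I would use the analogue of \eqref{tp}: since $x\mapsto\ln(1+x)$ is concave and vanishes at $0$, one has $\ln(1+x)\ge\frac{x}{y}\ln(1+y)$ for $0<x\le y$ (equivalently, $\frac{\ln(1+x)}{x}$ is decreasing). Applying this with $x=t/p_i$ and $y=t/\px$ (so $x\le y$ because $p_i\ge\px$) yields
\begin{equation}
-\ln\Bigpar{1+\frac{t}{p_i}}\le-\frac{\px}{p_i}\ln\Bigpar{1+\frac{t}{\px}},
\end{equation}
and summing over $i$ collapses $\sumin\px/p_i$ to $\px\mu$, giving
\begin{equation}
\P(X\le\lex)\le\exp\biggpar{t\gl\mu-\px\mu\ln\Bigpar{1+\frac{t}{\px}}}.
\end{equation}

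Finally I would optimize over $t\ge0$. Differentiating the exponent $g(t):=t\gl\mu-\px\mu\ln(1+t/\px)$ gives $g'(t)=\gl\mu-\frac{\mu}{1+t/\px}$, which vanishes at $1+t/\px=1/\gl$, \ie{} $t=(\gl\qw-1)\px$; this is nonnegative precisely because $\gl\le1$, so the optimizer is admissible. Substituting back, the exponent becomes $\px\mu\bigpar{(1-\gl)-\ln(1/\gl)}\cdot(-1)=-\px\mu(\gl-1-\ln\gl)$ after simplification, which is exactly \eqref{xp-}. I do not anticipate a genuine obstacle: the only thing to watch is the direction of the convexity inequality (concavity of $\ln(1+x)$ rather than convexity of $-\ln(1-x)$) and the sign bookkeeping from the optimization, since $\gl\le1$ makes several quantities that were positive in the upper-tail case change sign. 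The absence of any upper constraint on $t$ is what makes the lower tail slightly more comfortable than \refT{T1}.
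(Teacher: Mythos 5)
Your proposal is correct and follows essentially the same route as the paper: the Markov bound $\P(X\le\lex)\le e^{t\lex}\E e^{-tX}$, the single-variable estimate $\E e^{-tX_i}\le(1+t/p_i)^{-1}$ via $e^t-1\ge t$, the convexity/concavity reduction to $-\px\mu\ln(1+t/\px)$, and the choice $t=(\gl^{-1}-1)\px$. The only (harmless) difference is presentational: you phrase the key inequality as concavity of $\ln(1+x)$, whereas the paper cites the convexity of $-\ln$ as in \eqref{tpp}; these are the same fact.
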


\begin{proof}
We follow closely the proof of \refT{T1}.
If $t\ge0$, then by \eqref{pgf},
\begin{equation}
  \E e^{-t X_i} 
= \frac{p_i}{e^{t}-1+p_i}
\le  \frac{p_i}{t+p_i}
=\Bigpar{1+\frac{t}{p_i}}\qw.
\end{equation}
Hence
\begin{equation}
  \E e^{-t X} = \prodin \E e^{-t X_i} 
\le \prodin \Bigpar{1+\frac{t}{p_i}}\qw
\end{equation}
and, in analogy to \eqref{markov2},
\begin{equation}\label{bo-}
  \begin{split}
\P(X\le \gl \mu)\le e^{t\gl\mu} \E e^{-t X}
\le \exp \biggpar{t\gl\mu -\sumin\ln \Bigpar{1+\frac{t}{p_i}}}.
  \end{split}
\end{equation}
In analogy with \eqref{tpp}, still by the convexity of $-\ln x$,
\begin{equation}\label{tpp-}
-\ln \Bigpar{1+\frac{t}{p_i}}
 \le-\frac{\px}{p_i}\ln\Bigpar{1+\frac{t}{\px}},
\end{equation}
and \eqref{bo-} yields
\begin{equation}\label{box-}
  \begin{split}
\P(X\le \gl \mu)
&\le \exp\biggpar{t\gl\mu -\ln \Bigpar{1+\frac{t}{\px}} \sumin
  \frac{\px}{p_i}}
\\&
= \exp\biggpar{t\gl\mu -\px\mu\ln \Bigpar{1+\frac{t}{\px}}}.
  \end{split}
\end{equation}
%Choose $t=(1-\eps)\px$ for some $\eps\in(0,1)$.
Choosing $t=(\gl\qw-1)\px$, we obtain \eqref{xp-}. 
\end{proof}

\section{A lower bound}

We show also a general lower bound for the upper tail  probabilities,
which shows that for constant $\gl>1$,
the exponents in Theorems \ref{T1} and \ref{T2} are at most
a constant factor away from best possible.

\begin{theorem}
  \label{TL}
For any $p_1,\dots,p_n\in(0,1]$ and any $\gl\ge1$,
\begin{equation}\label{tl}
  \P(X\ge\lex)
\ge \frac{(1-\px)^{1+1/\px}}{2\px\mu}(1-\px)^{(\gl-1)\mu}.
\end{equation}
\end{theorem}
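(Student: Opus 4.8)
The plan is to first reduce the statement, for a general $\gl\ge1$, to the single case of a threshold at the mean, and then to establish an anti-concentration estimate there by a second-moment argument.

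For the reduction I would invoke \refL{L0}(i), which says precisely that $k\mapsto(1-\px)^{-k}\P(X\ge k)$ is non-decreasing on the integers. Since $X$ is integer-valued and $\gl\ge1$ gives $\ceil{\gl\mu}\ge\ceil\mu$,
\begin{equation*}
\P(X\ge\lex)=\P(X\ge\ceil{\gl\mu})\ge(1-\px)^{\ceil{\gl\mu}-\ceil\mu}\P(X\ge\ceil\mu)\ge(1-\px)^{(\gl-1)\mu+1}\P(X\ge\ceil\mu),
\end{equation*}
where the last step uses $\ceil{\gl\mu}\le\gl\mu+1$ and $\ceil\mu\ge\mu$. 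Hence the theorem follows once one proves the $\gl$-free inequality $\P(X\ge\ceil\mu)\ge(1-\px)^{1/\px}/(2\px\mu)$. It is worth noting that this reduction is lossless, so this inequality must be proved with \emph{exactly} that constant.

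For the anti-concentration bound I would apply Cauchy--Schwarz (Paley--Zygmund) to the nonnegative variable $W:=(X-\ceil\mu+1)_+$, whose support is exactly $\{X\ge\ceil\mu\}$, giving $\P(X\ge\ceil\mu)\ge(\E W)^2/\E W^2$. The denominator is controlled through $\Var X=\sumin(1-p_i)/p_i^2\le\mu/\px$ (using $p_i^{-2}\le\px^{-1}p_i^{-1}$), which is where $\px\mu$ enters. For the numerator, $\E W\ge\E[(X-\mu)_+]=\tfrac12\E|X-\mu|$, supplying the factor $\tfrac12$; and to bound $\E|X-\mu|$ below I would condition on $(X_i)_{i\ge2}$ and use that $a\mapsto\E|X_1-a|$ is minimized when $a$ is a median of $X_1$, so that $\E|X-\mu|\ge\E\bigl|X_1-\operatorname{med}(X_1)\bigr|$, a quantity depending only on $\px$.

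The main obstacle is this last bound: assembling the three ingredients into \emph{precisely} $(1-\px)^{1/\px}/(2\px\mu)$, uniformly over all admissible $(p_i)$ and in both regimes. When few variables dominate, the fat geometric tail makes the median/MAD term essential and should reproduce the $(1-\px)^{1/\px}$ dependence; when many variables are present one has $\E W\approx c\,\sigma$, and the crude bound $\Var X\le\mu/\px$ must not be squandered. One should also record why a tempting shortcut fails: lower-bounding $\P(X\ge\mu-c\sigma)$ by Cantelli's inequality and then shifting up to $\ceil\mu$ via \refL{L0} is useless, because bridging a distance $\Theta(\sigma)$ costs a factor $(1-\px)^{\Theta(\sigma)}=e^{-\Theta(\sqrt{\px\mu})}$, far below the target when $\px\mu$ is large. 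The estimate must therefore be anchored at the mean itself, which is exactly what the second-moment step above achieves.
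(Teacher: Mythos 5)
Your reduction of the general case to an anti-concentration bound at the mean is correct (it is essentially \refL{L0} again), but the proposal is not a proof: the inequality $\P(X\ge\ceil\mu)\ge(1-\px)^{1/\px}/(2\px\mu)$, which you yourself flag as the ``main obstacle'', carries the entire content of the theorem at $\gl=1$, and the second-moment route you sketch demonstrably cannot deliver it with the required constant. Test it on a single $X_1\sim\Ge(p)$ with $p$ small, where $\mu=1/p$, $\px\mu=1$, and the target is $(1-p)^{1/p}/2\approx 1/(2e)\approx0.18$. Rescaling so that $pX_1$ is nearly $\Exp(1)$, one has $\E(X_1-\mu)_+\approx\mu/e$ and $\E\bigl((X_1-\mu)_+\bigr)^2\approx2\mu^2/e$, so Cauchy--Schwarz with the \emph{exact} moments gives $(\mu/e)^2\big/(2\mu^2/e)=1/(2e)$: Paley--Zygmund is already tight against the target, with no slack at all. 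Hence each of your relaxations is fatal: replacing $\E W^2$ by $\Var X\approx\mu^2$ costs a factor $2/e$, and replacing $\E(X-\mu)_+\approx0.368\mu$ by $\tfrac12\E|X_1-\operatorname{med}X_1|\approx\tfrac12(\ln 2)\mu\approx0.347\mu$ costs another squared factor; the product leaves you near $0.12$, well below $0.18$. Since, as you note, your reduction is lossless, the shortfall cannot be recovered elsewhere.

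The paper sidesteps the difficulty by not anchoring at the mean. Set $\eps:=1/(\px\mu)$. The lower-tail Chernoff bound of \refT{TL1}, applied with $\gl=1-\eps$ and combined with the elementary \refL{LA} (with $A=\px\mu\ge1$), gives $\P(X\le(1-\eps)\mu)\le\exp\bigl(-\px\mu(-\eps-\ln(1-\eps))\bigr)\le1-\tfrac{1}{2\px\mu}$, hence $\P(X\ge(1-\eps)\mu)\ge1/(2\px\mu)$ --- no second moment is needed. Then \refL{L0}(ii) shifts the threshold from $(1-\eps)\mu$ up to $\gl\mu$ at a multiplicative cost of $(1-\px)^{(\gl-1+\eps)\mu+1}=(1-\px)^{(\gl-1)\mu}(1-\px)^{1+1/\px}$, which is exactly where the constant in \eqref{tl} comes from. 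This also corrects your closing heuristic: one need not anchor ``at the mean itself''. Anchoring at distance $\eps\mu=1/\px$ below the mean (far less than $\sigma\approx\sqrt{\mu/\px}$ when $\px\mu$ is large) costs only the constant factor $(1-\px)^{1/\px}\approx e^{-1}$, and the theorem's constant is built precisely to absorb that cost; what you correctly ruled out is only a shift over a distance $\Theta(\sigma)$.
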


\begin{lemma}\label{LA}
  If $A\ge1$ and $0\le x\le 1/A$, then
  \begin{equation}
A\bigpar{x+\ln(1-x)}	
\le \ln\bigpar{1-Ax^2/2}.
  \end{equation}
\end{lemma}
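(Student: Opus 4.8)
The plan is to fix $A\ge1$ and read the claimed inequality as a one-variable statement on the interval $I:=[0,1/A]$. I would set
\begin{equation}
  g(x):=\ln\bigpar{1-\tfrac12Ax^2}-A\bigpar{x+\ln(1-x)},
\end{equation}
so that the assertion is exactly $g(x)\ge0$ for $x\in I$. First I would check well-definedness: since $x\le1/A\le1$ we have $Ax^2/2\le 1/(2A)\le\tfrac12<1$, so the first logarithm is finite (indeed negative), while $\ln(1-x)$ is finite for $x<1$. The only boundary value $x=1$ can occur is when $A=1$, and there $A\bigpar{x+\ln(1-x)}=-\infty$ while the right-hand side is finite, so the inequality is trivial; hence I may restrict to $0\le x<1$.

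Next I would record the initial value $g(0)=\ln1-A\xpar{0+\ln1}=0$, reducing the problem to showing that $g$ is nondecreasing on $[0,1/A)$. Differentiating gives
\begin{equation}
  g'(x)=\frac{-Ax}{1-\tfrac12Ax^2}-A+\frac{A}{1-x}
  =\frac{Ax}{1-x}-\frac{Ax}{1-\tfrac12Ax^2},
\end{equation}
and the crucial step is to place the two remaining fractions over a common denominator. Since the numerator simplifies as $x-\tfrac12Ax^2=x\bigpar{1-\tfrac12Ax}$, a short computation yields the factorization
\begin{equation}
  g'(x)=\frac{Ax^2\bigpar{1-\tfrac12Ax}}{(1-x)\bigpar{1-\tfrac12Ax^2}},
\end{equation}
which makes the sign transparent.

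Finally I would check that every factor on the right is nonnegative for $0\le x<1/A$. Indeed $Ax^2\ge0$; the hypothesis $x\le1/A$ gives $Ax\le1$, hence $1-\tfrac12Ax\ge\tfrac12>0$; we have $1-x>0$ since $x<1/A\le1$; and $1-\tfrac12Ax^2>0$ as already noted. Thus $g'(x)\ge0$ throughout $[0,1/A)$, so $g$ is nondecreasing and $g(x)\ge g(0)=0$, which is the stated bound.

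The computation is elementary, so the only genuine obstacle is the algebra of the derivative: the two terms $Ax/(1-x)$ and $Ax/\bigpar{1-\tfrac12Ax^2}$ individually diverge as $x\to1^-$, and it is only after clearing denominators that the factor $1-\tfrac12Ax$ emerges and exhibits $g'$ as a product of manifestly nonnegative quantities on $I$. I would double-check this cancellation carefully, since a sign slip there would wreck the monotonicity argument; once the factorization is correct, everything else follows immediately from $Ax\le1$.
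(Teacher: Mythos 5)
Your proof is correct and is essentially the paper's argument: the paper works with $f=-g$, shows $f(0)=0$ and $f'(x)=-\frac{Ax}{1-x}+\frac{Ax}{1-Ax^2/2}\le0$ by observing $0<1-x\le1-Ax^2/2$ on $[0,1/A)$, which is the same sign comparison you obtain via your explicit factorization of $g'$.
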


\begin{proof}
  Let $f(x):=A\bigpar{x+\ln(1-x)}-\ln\bigpar{1-Ax^2/2}$.
Then $f(0)=0$ and
\begin{equation}
  f'(x)=A\Bigpar{1-\frac{1}{1-x}}+\frac{Ax}{1-Ax^2/2}
=
-\frac{Ax}{1-x}+\frac{Ax}{1-Ax^2/2}
\le0
\end{equation}
for $0\le x<1/A\le1$, since then $0<1-x\le 1-Ax^2/2$.
Hence $f(x)\le0$ for $0\le x\le 1/A$.
\end{proof}

\begin{proof}[Proof of \refT{TL}]
Let $\eps:=1/(\px\mu)$.
By \refT{TL1} (with $\gl=1-\eps$) and \refL{LA} (with $A=\px\mu\ge1$),
\begin{equation}
  \begin{split}
\P(X\le(1-\eps)\mu)
\le \exp\bigpar{-\px\mu(-\eps-\ln(1-\eps))}
\le 1-\frac{\px\mu\eps^2}2
=1-\frac{1}{2\px\mu}.
  \end{split}
\end{equation}
Hence,
$\P(X\ge(1-\eps)\mu) \ge \xqfrac{1}{2\px\mu}$, and by \refL{L0}(ii),
\begin{equation*}
  \P(X\ge\lex)
\ge (1-\px)^{(\gl-1+\eps)\mu+1}
\P(X\ge(1-\eps)\mu) 
\ge (1-\px)^{(\gl-1+\eps)\mu+1}\frac{1}{2\px\mu},
\end{equation*}
which completes the proof since $\eps\mu=1/\px$.
\end{proof}

\section{Exponential distributions}

In this section we assume that $X=\sumin X_i$ where $X_i$, $i=1,\dots,n$, are
independent random variables with exponential distributions:
$X_i\sim\Exp(a_i)$, with density function $a_ixe^{-a_i x}$, $x>0$,
and expectation $\E X_i=1/a_i$. (Thus $a_i$ can be interpreted as a rate.)
The exponential distribution is the continuous analogue of the geometric
distributions, and the results above have (simpler) analogues for
exponential distributions.
We now define
\begin{align}
  \mu&:=\E X = \sumin \E X_i = \sumin \frac{1}{a_i},
\\
\ax&:=\min_i a_i.
\end{align}

\begin{theorem}
  \label{Texp}
Let $X=\sumin X_i$ with $X_i\sim\Exp(a_i)$ independent.
\begin{romenumerate}[-10pt]
\item 
For any $\gl\ge1$,
\begin{equation}%\label{t2}
	\P(X\ge \lex)
\le\gl\qw e^{-\ax\mu(\gl-1-\ln\gl)}.
  \end{equation}
\item For any $\gl\ge1$, we have also the simpler but weaker
\begin{equation}%\label{c2}
  \P(X\ge \lex)
\le  e^{1-\gl} .
\end{equation}
\item 
For any $\gl\le1$,
\begin{equation}%\label{xp-}
  \P(X\le \lex)
\le 
 e^{-\ax\mu(\gl-1-\ln \gl)}.
\end{equation}
\item 
For any $\gl\ge1$,
\begin{equation}%\label{tl}
  \P(X\ge\lex)
\ge \frac{1}{2e\ax\mu}e^{-\ax\mu(\gl-1)}.
\end{equation}
\end{romenumerate}
\end{theorem}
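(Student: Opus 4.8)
The plan is to transcribe the proofs of Theorems \ref{T2}, \ref{TL1}, and \ref{TL} to the exponential setting, exploiting that here the moment generating function is exact: for $X_i\sim\Exp(a_i)$ one has $\E e^{tX_i}=a_i/(a_i-t)=\bigpar{1-t/a_i}\qw$ when $t<a_i$, and $\E e^{-tX_i}=\bigpar{1+t/a_i}\qw$ when $t\ge0$. The second ingredient is the exponential analogue of \refL{L0}: taking $a_1=\ax$ and conditioning on $X-X_1$, since $\P(X_1\ge s)=e^{-\ax s_+}$ and $(x-i)_+\le(x-y)+(y-i)_+$ for $x\ge y$, one obtains
\begin{equation*}
  \P(X\ge x)\ge e^{-\ax(x-y)}\P(X\ge y),\qquad x\ge y.
\end{equation*}
Memorylessness removes the ``$+1$'' of \eqref{aoxy}, which is precisely why the exponential bounds are cleaner. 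Running the computation of \refL{L1} with this inequality — writing $e^{tX}\ett{X\ge x}=e^{tx}\ett{X\ge x}+t\int_x^\infty e^{ts}\ett{X\ge s}\dd s$ and taking expectations — yields the sharpened Markov bound
\begin{equation*}
  \P(X\ge x)\le\Bigpar{1-\frac{t}{\ax}}e^{-tx}\E e^{tX},\qquad 0\le t<\ax.
\end{equation*}

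For (i) I would combine this with the exact MGF. Consolidating the $a_i$ via the convexity inequality \eqref{tp} (with $x=t/a_i\le t/\ax=y$) gives $\ln\E e^{tX}=-\sumin\ln\bigpar{1-t/a_i}\le-\ax\mu\ln\bigpar{1-t/\ax}$. Substituting and choosing $t=\ax(1-\gl\qw)$, so that $1-t/\ax=\gl\qw$, the prefactor becomes $\gl\qw$ and the exponent collapses to $-\ax\mu(\gl-1-\ln\gl)$, which is exactly the claimed bound. (This $t$ is not the MGF-optimal choice, but it produces the stated closed form, and that is all that is needed.) Part (ii) is then immediate: $\ax\mu\ge1$ since $\mu=\sumin a_i\qw\ge1/\ax$, so $\gl\qw e^{-\ax\mu(\gl-1-\ln\gl)}\le\gl\qw e^{-(\gl-1-\ln\gl)}=e^{1-\gl}$.

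For (iii) I would mimic \refT{TL1}: by Markov $\P(X\le\lex)\le e^{t\gl\mu}\E e^{-tX}$, and the exact MGF together with the convexity used in \eqref{tpp-} gives $\ln\E e^{-tX}\le-\ax\mu\ln\bigpar{1+t/\ax}$; choosing $t=\ax(\gl\qw-1)\ge0$ (valid as $\gl\le1$) makes $1+t/\ax=\gl\qw$ and the exponent equal to $-\ax\mu(\gl-1-\ln\gl)$. Finally (iv) follows \refT{TL} almost verbatim: set $\eps:=1/(\ax\mu)$, apply (iii) at $\gl=1-\eps$ and \refL{LA} with $A=\ax\mu\ge1$ and $x=\eps=1/A$ to get $\P(X\le(1-\eps)\mu)\le1-1/(2\ax\mu)$, whence $\P(X\ge(1-\eps)\mu)\ge1/(2\ax\mu)$; then push up from $(1-\eps)\mu$ to $\gl\mu$ via the memoryless inequality above, using $\ax\mu\eps=1$, to reach $\P(X\ge\lex)\ge e^{-\ax\mu(\gl-1+\eps)}/(2\ax\mu)=e^{-\ax\mu(\gl-1)}/(2e\ax\mu)$.

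The only genuine work is establishing the two exponential lemmas; everything downstream is a clean substitution, and \refL{LA} is a purely analytic fact usable as is. I expect the continuous version of \refL{L1} to be the most error-prone step, since it relies on the layer-cake identity and on the convergence $\int_x^\infty e^{ts}e^{-\ax(s-x)}\dd s=e^{tx}/(\ax-t)$ for $t<\ax$; but there is no real obstacle, since the exactness of the exponential MGF and the absence of the rounding ``$+1$'' make each exponential statement strictly simpler than its geometric counterpart.
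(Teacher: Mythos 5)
Your argument is correct, and every computation checks out (the choice $t=\ax(1-\gl\qw)$ gives prefactor $1-t/\ax=\gl\qw$ and exponent $-\ax\mu(\gl-1-\ln\gl)$; the choice $t=\ax(\gl\qw-1)$ gives the lower-tail exponent; and $\eps=1/(\ax\mu)$ with \refL{LA} reproduces the constant $1/(2e\ax\mu)$ exactly). However, it is not the route the paper actually takes. The paper deduces \refT{Texp} from the geometric results by a limiting argument: it sets $X_i\NN\sim\Ge(a_i/N)$, so that $X_i\NN/N\dto X_i$ and hence $X\NN/N\dto X$, observes that $\px\NN\mu\NN=(\ax/N)(N\mu)=\ax\mu$ is invariant under this scaling, and lets $\Ntoo$ in \eqref{t2}, \eqref{c2}, \eqref{xp-} and \eqref{tl}, using $(1-\ax/N)^{cN\mu}\to e^{-c\ax\mu}$ and the continuity of the law of $X$ to pass the tail probabilities to the limit. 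Your approach --- re-running the geometric proofs with the exact \mgf{} $\E e^{tX_i}=a_i/(a_i-t)$ --- is mentioned in the paper only as a one-line parenthetical alternative and is never carried out; you supply the missing content, namely the exponential analogues of Lemmas \refand{L0}{L1} (the conditioning on $X-X_1$ with $a_1=\ax$, and the layer-cake computation yielding the sharpened Markov bound with prefactor $1-t/\ax$), both of which are correct. The trade-off is clear: the paper's limit argument is shorter given the geometric theorems but requires a (routine, and in the paper unstated) justification that both sides of each inequality converge; your direct argument is self-contained, makes the absence of the ceiling corrections (and hence the cleaner exponential constants) transparent, and would also let one optimize the bounds independently of the geometric case.
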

\begin{proof}
  Let $X_i\NN\sim\Ge(a_i/N)$ be independent (for $N>\max_i a_i$).
Then $X_i\NN/N\dto X_i$, where $\dto$ denotes convergence in distribution, 
and thus $X\NN/N\dto X$, where
$X\NN:=\sumin X_i\NN$.
Furthermore, $\mu\NN:=\E X\NN=M\nu$ and $\px:=\min_i (a_i/N)=\ax/N$.
The results follow by taking the limit as \Ntoo{} in \eqref{t2}, \eqref{c2},
\eqref{xp-} and \eqref{tl}.
(Alternatively, we may imitate the proofs above, using $\E
e^{tX_i}=a_i/(a_i-t)$ for $t<a_i$.)
\end{proof}

\newcommand\AAP{\emph{Adv. Appl. Probab.} }
\newcommand\JAP{\emph{J. Appl. Probab.} }
\newcommand\JAMS{\emph{J. \AMS} }
\newcommand\MAMS{\emph{Memoirs \AMS} }
\newcommand\PAMS{\emph{Proc. \AMS} }
\newcommand\TAMS{\emph{Trans. \AMS} }
\newcommand\AnnMS{\emph{Ann. Math. Statist.} }
\newcommand\AnnPr{\emph{Ann. Probab.} }
\newcommand\CPC{\emph{Combin. Probab. Comput.} }
\newcommand\JMAA{\emph{J. Math. Anal. Appl.} }
\newcommand\RSA{\emph{Random Struct. Alg.} }
\newcommand\ZW{\emph{Z. Wahrsch. Verw. Gebiete} }
\newcommand\DMTCS{\jour{Discr. Math. Theor. Comput. Sci.} }

\newcommand\AMS{Amer. Math. Soc.}
\newcommand\Springer{Springer-Verlag}
\newcommand\Wiley{Wiley}

\newcommand\vol{\textbf}
\newcommand\jour{\emph}
\newcommand\book{\emph}
\newcommand\inbook{\emph}
\def\no#1#2,{\unskip#2, no. #1,} %(typeset after year) 
\newcommand\toappear{\unskip, to appear}

\newcommand\arxiv[1]{\url{arXiv:#1}}
\newcommand\arXiv{\arxiv}

\def\nobibitem#1\par{}


\begin{thebibliography}{99}

\bibitem{BLM}
St\'ephane Boucheron, 
G\'abor Lugosi
and Pascal  Massart, 
\emph{Concentration Inequalities},
Oxford Univ. Press, Oxford, 2013.


\bibitem{DemboZeitouni}  
Amir Dembo and Ofer Zeitouni,
\book{Large Deviations Techniques and Applications}.
 2nd ed.,
Springer, New York, 1998.


\bibitem[Janson, {\L}uczak and Ruci\'nski(2000)]{JLR}
Svante Janson, Tomasz \L uczak \& Andrzej Ruci\'nski,
\book{Random Graphs}.
\Wiley, New York, 2000.

\nobibitem{Gut}
A. Gut, 
\emph{Probability: A Graduate Course},
Springer, New York, 2005.

\bibitem{Kallenberg}
Olav Kallenberg,
\book{Foundations of Modern Probability.}
2nd ed., Springer, New York, 2002. 


\end{thebibliography}
\end{document}